\newtheorem{theorem}{Theorem}
\newtheorem{proposition}[theorem]{Proposition}
\newenvironment{proof}[1][Proof]{\noindent\textbf{#1.} }{\ \rule{0.5em}{0.5em}}
\begin{document}

\title{De- Moivre's and Euler Formulas for Matrices of Split Quaternions}
\author{Melek Erdo\u{g}du \thanks{Corresponding Author}
\and Mustafa \"{O}zdemir }
\maketitle

\begin{abstract}
In this paper, real matrix representations of split quaternions are examined
in terms of the casual character of quaternion. Then, we give De-Moivre' s
formula for real matrices of timelike and spacelike split quaternions,
separately. Finally, we state the Euler theorem for real matrices of pure
split quaternions.

\textbf{Keywords: }Split Quaternion, De-Moivre's Formula, Euler Formula.

\textbf{MSC Classification:} 30C35.

\end{abstract}

\section{Introduction}

\noindent Sir William Rowan Hamilton discovered the quaternions in 1843, which
was one of the his best contribution made to mathematical science. This
discovery is a way of extending complex numbers to higher dimensions. So the
set of quaternions, which was introduced by Hamilton, can be represented as%
\[
\mathbb{H}=\{q=q_{0}+q_{1}i+q_{2}j+q_{3}k;\text{ \ }q_{0},q_{1},q_{2},q_{3}\in%
\mathbb{R}
\}
\]
where%
\[
i^{2}=j^{2}=k^{2}=-1\text{ and }ijk=-1.
\]
The set of quaternions is a member of noncommutative division algebra
\cite{kantor}.

\bigskip

\noindent We know that real quaternion algebra is isomorphic to a real
$4\times4$ matrix algebra. The real matrix representations of quaternions are
investigated in \cite{zhang-97}, \cite{farebrother} and \cite{grob}. Euler and
De-Moivre formulas for real matrices associated with quaternions are studied
in \cite{yayli}.

\bigskip

\noindent In 1849, James Cockle introduced the set of split\ quaternions, also
known as coquaternions. The real algebra of split quaternions, denoted by
$\widehat{\mathbb{H}},$ is a four dimensional vector space over the real
field\ $%
\mathbb{R}
$ of real numbers with a basis $\{1,i,j,k\}$ satisfying%
\[
i^{2}=-1,\text{ }j^{2}=k^{2}=1\text{ and }ijk=1.
\]
The set of split quaternions is noncommutative, too. Unlike quaternion
algebra, the set of split quaternions contains zero divisor, nilpotent
elements and nontrivial idempotents \cite{kula}, \cite{ozdemir-09}.

\bigskip

\noindent Split quaternions is a recently developing topic. There are some
studies related to geometric applications of split quaternions such as
\cite{kula}, \cite{ozdemir-05}, \cite{split} and \cite{ozdemir-06}.
Particularly, the geometric and physical applications of quaternions require
solving quaternionic equations. Therefore, there are many studies on
quaternionic and split quaternionic equations. For example; the rearrangement
method of solving two sided linear split quaternionic equations is given in
\cite{erdogdu 3} and De Moivre's formula is used to find the roots of split
quaternion in work \cite{ozdemir-09}. Furthermore, $2\times2$ complex matrix
representation of any split quaternion is presented in \cite{alagoz} and the
left and right real matrix representations of split quaternions are studied in
\cite{kula}.

\bigskip

\noindent In this paper, we will investigate real matrix representations of
split quaternions. First, we present a brief introduction of split
quaternions. Then, we examine real matrices associated with split quaternions
depending on the casual character of the quaternion. Moreover, we give
De-Moivre' s formula for real matrices of timelike and spacelike split
quaternions, separately. Finally, we state the Euler theorem for real matrices
of pure split quaternions.\newpage

\section{Preliminaries}

\noindent In this section, we present an introduction to split quaternions for
the necessary background \cite{kula}, \cite{ozdemir-09}, \cite{split}.

\subsection{Split quaternions}

\noindent The set split quaternions can be represented as%
\[
\widehat{\mathbb{H}}=\{q=q_{0}+q_{1}i+q_{2}j+q_{3}k;\text{ \ }q_{0}%
,q_{1},q_{2},q_{3}\in%
\mathbb{R}
\}
\]
where the product table is:%
\[%
\begin{tabular}
[c]{|c|c|c|c|c|}\hline
& $1$ & $i$ & $j$ & $k$\\\hline
$1$ & $1$ & $i$ & $j$ & $k$\\\hline
$i$ & $i$ & $-1$ & $k$ & $-j$\\\hline
$j$ & $j$ & $-k$ & $1$ & $-i$\\\hline
$k$ & $k$ & $j$ & $i$ & $1$\\\hline
\end{tabular}
\ \ \ \ \ \
\]
We write any split quaternion in the form $q=q_{0}+q_{1}i+q_{2}j+q_{3}%
k=S_{q}+\overrightarrow{V_{q}}$ where $S_{q}=q_{0}$ denotes the scalar part of
$q$ and $\overrightarrow{V_{q}}=q_{1}i+q_{2}j+q_{3}k$ denotes vector part of
$q.$ If $S_{q}=0$ then $q$ is called pure split quaternion. The set of pure
split quaternions are identified with the Minkowski 3 space. The Minkowski 3
space is Euclidean 3 space with the Lorentzian inner product
\[
\left\langle \overrightarrow{u},\overrightarrow{v}\right\rangle _{\mathbb{L}%
}=-u_{1}v_{1}+u_{2}v_{2}+u_{3}v_{3}%
\]
where $\overrightarrow{u}=(u_{1},u_{2},u_{3})$ and $\overrightarrow{v}%
=(v_{1},v_{2},v_{3})\in\mathbb{E}^{3}$ and denoted by $\mathbb{E}_{1}^{3}.$ We
say that a vector $\overrightarrow{u}$ in $\mathbb{E}_{1}^{3}$ is spacelike,
timelike or null if $\left\langle \overrightarrow{u},\overrightarrow
{u}\right\rangle _{\mathbb{L}}>0,$ $\left\langle \overrightarrow
{u},\overrightarrow{u}\right\rangle _{\mathbb{L}}>0$ or $\left\langle
\overrightarrow{u},\overrightarrow{u}\right\rangle _{\mathbb{L}}=0,$
respectively. The conjugate of a split quaternion $q=q_{0}+q_{1}i+q_{2}%
j+q_{3}k$ is denoted by $\overline{q}$ and it is $\overline{q}=S_{q}%
-\overrightarrow{V_{q}}=q_{0}-q_{1}i-q_{3}j-q_{4}k.$ For any $p,q\in
\widehat{\mathbb{H}}$, the sum and product of split quaternions $p$ and $q$
are%
\begin{align*}
p+q  &  =S_{p}+S_{q}+\overrightarrow{V_{p}}+\overrightarrow{V_{q}},\\
pq  &  =S_{p}S_{q}+\left\langle \overrightarrow{V_{p}},\overrightarrow{V_{q}%
}\right\rangle _{\mathbb{L}}+S_{p}\overrightarrow{V_{q}}+S_{q}\overrightarrow
{V_{p}}+\overrightarrow{V_{p}}\times_{\mathbb{L}}\overrightarrow{V_{q}},
\end{align*}
respectively. Here $\times_{\mathbb{L}}$ denotes Lorentzian vector product and
is defined as%
\[
\overrightarrow{u}\times_{\mathbb{L}}\overrightarrow{v}=\left\vert
\begin{array}
[c]{ccc}%
-e_{1} & e_{2} & e_{3}\\
u_{1} & u_{2} & u_{3}\\
v_{1} & v_{2} & v_{3}%
\end{array}
\right\vert ,
\]
for vectors $\overrightarrow{u}=(u_{1},u_{2},u_{3})$ and $\overrightarrow
{v}=(v_{1},v_{2},v_{3})$ of Minkowski 3 space. And norm of the split
quaternion $q$ is defined by%
\[
N_{q}=\sqrt{\left\vert q\overline{q}\right\vert }=\sqrt{\left\vert q_{0}%
^{2}+q_{1}^{2}-q_{2}^{2}-q_{3}^{2}\right\vert }.
\]
If $N_{q}=1$ then $q$ is called unit split quaternion and $q_{0}=q/N_{q}$ is a
unit split quaternion for $N_{q}\neq0.$ And the product%
\[
I_{q}=q\overline{q}=\overline{q}q=q_{0}^{2}+q_{1}^{2}-q_{2}^{2}-q_{3}%
^{2}\medskip
\]
determines the character of a split quaternion. A split quaternion is
spacelike, timelike or lightlike (null) if $I_{q}<0,$ $I_{q}>0$ or $I_{q}=0,$
respectively. Polar forms of split quaternions are defined as follows:

\noindent\textbf{i.\qquad}Every timelike split quaternion with spacelike
vector part can be written in the form
\[
q=N_{q}(\cosh\theta+\overrightarrow{\varepsilon}\sinh\theta)
\]
where $\overrightarrow{\varepsilon}$ is spacelike unit vector in
$\mathbb{E}_{1}^{3}.$

\noindent\textbf{ii.\qquad}Every timelike split quaternion with timelike
vector part can be written in the form
\[
q=N_{q}(\cos\theta+\overrightarrow{\varepsilon}\sin\theta)
\]
where $\overrightarrow{\varepsilon}$ is timelike unit vector in $\mathbb{E}%
_{1}^{3}.$

\noindent\textbf{iii.\qquad}Every spacelike split quaternion can be written in
the form
\[
q=N_{q}(\sinh\theta+\overrightarrow{\varepsilon}\cosh\theta)
\]
where $\overrightarrow{\varepsilon}$ is spacelike unit vector in
$\mathbb{E}_{1}^{3}.$\newpage

\subsection{De-Moivre's formula for split quaternions}

\noindent Euler and De-Moivre formula for complex numbers are generalized for
quaternions in \cite{cho 1}, \cite{cho 2}. Similarly to the discussions in
\cite{cho 1} and \cite{cho 2}, De-Moivre formula and roots of split
quaternions are investigated in \cite{ozdemir-09} by considering the casual
characters of the quaternion. De-Moivre formulas for split quaternions, which
are stated in \cite{ozdemir-09}, can be given as follows;

\noindent\textbf{i.}\qquad If $q=N_{q}(\cosh\theta+\overrightarrow
{\varepsilon}\sinh\theta)$ be a timelike split quaternion with spacelike
vector part then%
\[
q^{n}=(N_{q})^{n}(\cosh n\theta+\overrightarrow{\varepsilon}\sinh n\theta),
\]

\noindent\textbf{ii.\qquad}If $q=N_{q}(\cos\theta+\overrightarrow{\varepsilon
}\sin\theta)$ be a timelike split quaternion with timelike vector part then%
\[
q^{n}=(N_{q})^{n}(\cos n\theta+\overrightarrow{\varepsilon}\sin n\theta),
\]

\noindent\textbf{iii.}\qquad If $q=N_{q}(\sinh\theta+\overrightarrow
{\varepsilon}\cosh\theta)$ be a spacelike split quaternion then%
\[
q^{n}=\left\{
\begin{tabular}
[c]{ll}%
$(N_{q})^{n}(\sinh\theta+\overrightarrow{\varepsilon}\cosh\theta),$ & $n$ is
odd\bigskip\\
$(N_{q})^{n}(\cosh n\theta+\overrightarrow{\varepsilon}\sinh n\theta)$ & $n$
is even
\end{tabular}
\ \right\}
\]
for $n\in\mathbb{N}$.

\subsection{$4\times4$ Real matrix representations of split quaternions}

\noindent For any $q\in\widehat{\mathbb{H}},$ consider the linear map
\[
g_{q}:\widehat{\mathbb{H}}\rightarrow\widehat{\mathbb{H}}%
\]
defined as
\[
g_{q}(p)=qp.
\]
This map is bijective and for $q=q_{0}+q_{1}i+q_{2}j+q_{3}k\in\widehat
{\mathbb{H}},$ we have%
\begin{align*}
g_{q}(1)  &  =q_{0}+q_{1}i+q_{2}j+q_{3}k,\\
g_{q}(i)  &  =-q_{1}+q_{0}i+q_{3}j-q_{2}k,\\
g_{q}(j)  &  =q_{2}+q_{3}i+q_{0}j+q_{1}k,\\
g_{q}(k)  &  =q_{3}-q_{2}i-q_{1}j+q_{0}k.
\end{align*}
Using this map, we may define an isomorphism between $\widehat{\mathbb{H}}$
and algebra of the matrices%
\[
\left\{  \left[
\begin{array}
[c]{cccc}%
q_{0} & -q_{1} & q_{2} & q_{3}\\
q_{1} & q_{0} & q_{3} & -q_{2}\\
q_{2} & q_{3} & q_{0} & -q_{1}\\
q_{3} & -q_{2} & q_{1} & q_{0}%
\end{array}
\right]  :q_{0},q_{1},q_{2},q_{3}\in\mathbb{%
\mathbb{R}
}\right\}  .
\]
The above obtained $4\times4$ real matrix for the split quaternion $q$ is
denoted by%
\[
\mathbf{L}_{q}=\left[
\begin{array}
[c]{cccc}%
q_{0} & -q_{1} & q_{2} & q_{3}\\
q_{1} & q_{0} & q_{3} & -q_{2}\\
q_{2} & q_{3} & q_{0} & -q_{1}\\
q_{3} & -q_{2} & q_{1} & q_{0}%
\end{array}
\right]
\]
and called the left matrix representation of $q.$

\begin{proposition}
\cite{kula} For any $p,q\in\widehat{\mathbb{H}}$ and $r\in%
\mathbb{R}
,$ the followings are satisfied;
\end{proposition}

\begin{description}
\item[i)] $\mathbf{L}_{p+q}\mathbf{=L}_{p}+\mathbf{L}_{q},$

\item[ii)] $\mathbf{L}_{pq}=\mathbf{L}_{p}\mathbf{L}_{q},$

\item[iii)] $\mathbf{L}_{rp}=r\mathbf{L}_{p},$

\item[iv)] $\mathbf{L}_{1}=\mathbf{I}_{4}.$\newpage
\end{description}

\noindent Similar to the previous discussion, consider the linear map
\[
f_{q}:\widehat{\mathbb{H}}\rightarrow\widehat{\mathbb{H}}%
\]
defined as
\[
f_{q}(p)=pq.
\]
This map is also bijective and for $q=q_{0}+q_{1}i+q_{2}j+q_{3}k\in
\widehat{\mathbb{H}},$ we have%
\begin{align*}
f_{q}(1)  &  =q_{0}+q_{1}i+q_{2}j+q_{3}k,\\
f_{q}(i)  &  =-q_{1}+q_{0}i-q_{3}j+q_{2}k,\\
f_{q}(j)  &  =q_{2}-q_{3}i+q_{0}j-q_{1}k,\\
f_{q}(k)  &  =q_{3}+q_{2}i+q_{1}j+q_{0}k.
\end{align*}
By this map, we may define an isomorphism between $\widehat{\mathbb{H}}$ and
algebra of the matrices%
\[
\left\{  \left[
\begin{array}
[c]{cccc}%
q_{0} & -q_{1} & q_{2} & q_{3}\\
q_{1} & q_{0} & -q_{3} & q_{2}\\
q_{2} & -q_{3} & q_{0} & q_{1}\\
q_{3} & q_{2} & -q_{1} & q_{0}%
\end{array}
\right]  :q_{0},q_{1},q_{2},q_{3}\in\mathbb{%
\mathbb{R}
}\right\}  .
\]
We denote above corresponding $4\times4$ real matrix for any split quaternion
$q$ by%
\[
\mathbf{R}_{q}=\left[
\begin{array}
[c]{cccc}%
q_{0} & -q_{1} & q_{2} & q_{3}\\
q_{1} & q_{0} & -q_{3} & q_{2}\\
q_{2} & -q_{3} & q_{0} & q_{1}\\
q_{3} & q_{2} & -q_{1} & q_{0}%
\end{array}
\right]
\]
and it is called the right matrix representation of $q.$

\begin{proposition}
\cite{kula} For any $p,q\in\widehat{\mathbb{H}}$ and $r\in%
\mathbb{R}
,$ the followings are satisfied;
\end{proposition}

\begin{description}
\item[i)] $\mathbf{R}_{p+q}=\mathbf{R}_{p}+\mathbf{R}_{q},$

\item[ii)] $\mathbf{R}_{pq}=\mathbf{R}_{q}\mathbf{R}_{p},$

\item[iii)] $\mathbf{R}_{rp}=r\mathbf{R}_{p},$

\item[iv)] $\mathbf{R}_{1}=\mathbf{I}_{4}.$
\end{description}

\section{De Moivre's formula for real matrices of split quaternions}

\noindent Depending on the casual character of split quaternion, we can
express real matrix representations of quaternion as follows:

\noindent\textbf{i.}\qquad Every timelike split quaternion $q.=N_{q}%
(\cosh\theta+\overrightarrow{\varepsilon}\sinh\theta)$ with spacelike vector
part, we may write%
\[
\mathbf{L}_{q}=N_{q}\left[
\begin{array}
[c]{cccc}%
\cosh\theta & -u_{1}\sinh\theta & u_{2}\sinh\theta & u_{3}\sinh\theta\\
u_{1}\sinh\theta & \cosh\theta & u_{3}\sinh\theta & -u_{2}\sinh\theta\\
u_{2}\sinh\theta & u_{3}\sinh\theta & \cosh\theta & -u_{1}\sinh\theta\\
u_{3}\sinh\theta & -u_{2}\sinh\theta & u_{1}\sinh\theta & \cosh\theta
\end{array}
\right]  =N_{q}[\cosh\theta\text{ }\mathbf{I}_{4}+\sinh\theta\text{
}\mathbf{L}_{\overrightarrow{\varepsilon}}],
\]%
\[
\mathbf{R}_{q}=N_{q}\left[
\begin{array}
[c]{cccc}%
\cosh\theta & -u_{1}\sinh\theta & u_{2}\sinh\theta & u_{3}\sinh\theta\\
u_{1}\sinh\theta & \cosh\theta & -u_{3}\sinh\theta & u_{2}\sinh\theta\\
u_{2}\sinh\theta & -u_{3}\sinh\theta & \cosh\theta & u_{1}\sinh\theta\\
u_{3}\sinh\theta & u_{2}\sinh\theta & -u_{1}\sinh\theta & \cosh\theta
\end{array}
\right]  =N_{q}[\cosh\theta\text{ }\mathbf{I}_{4}+\sinh\theta\text{
}\mathbf{R}_{\overrightarrow{\varepsilon}}]
\]
where $\overrightarrow{\varepsilon}=(u_{1},u_{2},u_{3})$ is spacelike unit
vector in $\mathbb{E}_{1}^{3}.$\newpage

\noindent\textbf{ii.}\qquad Every timelike split quaternion $q{\small =}%
N_{q}(\cos\theta+\overrightarrow{\varepsilon}\sin\theta)$ with timelike vector
part, we may write%
\begin{align*}
\mathbf{L}_{q}  &  =N_{q}\left[
\begin{array}
[c]{cccc}%
\cos\theta & -u_{1}\sin\theta & u_{2}\sin\theta & u_{3}\sin\theta\\
u_{1}\sin\theta & \cos\theta & u_{3}\sin\theta & -u_{2}\sin\theta\\
u_{2}\sin\theta & u_{3}\sin\theta & \cos\theta & -u_{1}\sin\theta\\
u_{3}\sin\theta & -u_{2}\sin\theta & u_{1}\sin\theta & \cos\theta
\end{array}
\right]  =N_{q}[\cos\theta\text{ }\mathbf{I}_{4}+\sin\theta\text{ }%
\mathbf{L}_{\overrightarrow{\varepsilon}}],\\
& \\
\mathbf{R}_{q}  &  =N_{q}\left[
\begin{array}
[c]{cccc}%
\cos\theta & -u_{1}\sin\theta & u_{2}\sin\theta & u_{3}\sin\theta\\
u_{1}\sin\theta & \cos\theta & -u_{3}\sin\theta & u_{2}\sin\theta\\
u_{2}\sin\theta & -u_{3}\sin\theta & \cos\theta & u_{1}\sin\theta\\
u_{3}\sin\theta & u_{2}\sin\theta & -u_{1}\sin\theta & \cos\theta
\end{array}
\right]  =N_{q}[\cos\theta\text{ }\mathbf{I}_{4}+\sin\theta\text{ }%
\mathbf{R}_{\overrightarrow{\varepsilon}}]
\end{align*}
where $\overrightarrow{\varepsilon}=(u_{1},u_{2},u_{3})$ is timelike unit
vector in $\mathbb{E}_{1}^{3}.$

\noindent\textbf{iii.}\qquad Every spacelike split quaternion $q=N_{q}%
(\sinh\theta+\overrightarrow{\varepsilon}\cosh\theta),$ we may write
\begin{align*}
\mathbf{L}_{q}  &  =N_{q}\left[
\begin{array}
[c]{cccc}%
\sinh\theta & -u_{1}\cosh\theta & u_{2}\cosh\theta & u_{3}\cosh\theta\\
u_{1}\cosh\theta & \sinh\theta & u_{3}\cosh\theta & -u_{2}\cosh\theta\\
u_{2}\cosh\theta & u_{3}\cosh\theta & \sinh\theta & -u_{1}\cosh\theta\\
u_{3}\cosh\theta & -u_{2}\cosh\theta & u_{1}\cosh\theta & \sinh\theta
\end{array}
\right]  =N_{q}[\sinh\theta\text{ }\mathbf{I}_{4}+\cosh\theta\text{
}\mathbf{L}_{\overrightarrow{\varepsilon}}],\\
& \\
\mathbf{R}_{q}  &  =N_{q}\left[
\begin{array}
[c]{cccc}%
\sinh\theta & -u_{1}\cosh\theta & u_{2}\cosh\theta & u_{3}\cosh\theta\\
u_{1}\cosh\theta & \sinh\theta & -u_{3}\cosh\theta & u_{2}\cosh\theta\\
u_{2}\cosh\theta & -u_{3}\cosh\theta & \sinh\theta & u_{1}\cosh\theta\\
u_{3}\cosh\theta & u_{2}\cosh\theta & -u_{1}\cosh\theta & \sinh\theta
\end{array}
\right]  =N_{q}[\sinh\theta\text{ }\mathbf{I}_{4}+\cosh\theta\text{
}\mathbf{R}_{\overrightarrow{\varepsilon}}]
\end{align*}
where $\overrightarrow{\varepsilon}=(u_{1},u_{2},u_{3})$ is spacelike unit
vector in $\mathbb{E}_{1}^{3}.$

\begin{theorem}
For any timelike split quaternion $q=N_{q}(\cosh\theta+\overrightarrow
{\varepsilon}\sinh\theta)$ with spacelike vector part, we have
\begin{align*}
\left[  \mathbf{L}_{q}\right]  ^{n}  &  =(N_{q})^{n}\left[
\begin{array}
[c]{cccc}%
\cosh n\theta & -u_{1}\sinh n\theta & u_{2}\sinh n\theta & u_{3}\sinh
n\theta\\
u_{1}\sinh n\theta & \cosh n\theta & u_{3}\sinh n\theta & -u_{2}\sinh
n\theta\\
u_{2}\sinh n\theta & u_{3}\sinh n\theta & \cosh n\theta & -u_{1}\sinh
n\theta\\
u_{3}\sinh n\theta & -u_{2}\sinh n\theta & u_{1}\sinh n\theta & \cosh n\theta
\end{array}
\right]  =(N_{q})^{n}[\cosh n\theta\text{ }\mathbf{I}_{4}+\sinh n\theta\text{
}\mathbf{L}_{\overrightarrow{\varepsilon}}],\\
& \\
\left[  \mathbf{R}_{q}\right]  ^{n}  &  =(N_{q})^{n}\left[
\begin{array}
[c]{cccc}%
\cosh n\theta & -u_{1}\sinh n\theta & u_{2}\sinh n\theta & u_{3}\sinh
n\theta\\
u_{1}\sinh n\theta & \cosh n\theta & -u_{3}\sinh n\theta & u_{2}\sinh
n\theta\\
u_{2}\sinh n\theta & -u_{3}\sinh n\theta & \cosh n\theta & u_{1}\sinh
n\theta\\
u_{3}\sinh n\theta & u_{2}\sinh n\theta & -u_{1}\sinh n\theta & \cosh n\theta
\end{array}
\right]  =(N_{q})^{n}[\cosh n\theta\text{ }\mathbf{I}_{4}+\sinh n\theta\text{
}\mathbf{R}_{\overrightarrow{\varepsilon}}]
\end{align*}
where $\overrightarrow{\varepsilon}=(u_{1},u_{2},u_{3})$ is spacelike unit
vector in $\mathbb{E}_{1}^{3}.$
\end{theorem}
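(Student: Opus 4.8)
The plan is to establish the De-Moivre formula for $\mathbf{L}_q$ and $\mathbf{R}_q$ by combining the isomorphism property of the matrix representation with the known De-Moivre formula for the quaternions themselves, stated in Section 2.2. The cleanest route exploits the multiplicative homomorphism $\mathbf{L}_{pq}=\mathbf{L}_p\mathbf{L}_q$ from the first Proposition. Taking $p=q$ repeatedly gives $\left[\mathbf{L}_q\right]^n=\mathbf{L}_{q^n}$ by an immediate induction: the base case is trivial, and the inductive step is $\left[\mathbf{L}_q\right]^{n+1}=\left[\mathbf{L}_q\right]^n\mathbf{L}_q=\mathbf{L}_{q^n}\mathbf{L}_q=\mathbf{L}_{q^{n+1}}$. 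The identical argument applies to the right representation, where the order-reversing property $\mathbf{R}_{pq}=\mathbf{R}_q\mathbf{R}_p$ is harmless because all factors are powers of the same $q$, so $\left[\mathbf{R}_q\right]^n=\mathbf{R}_{q^n}$ holds as well.

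Once this reduction is in place, I would invoke part (i) of the split-quaternion De-Moivre formula from Section 2.2, namely $q^n=(N_q)^n(\cosh n\theta+\overrightarrow{\varepsilon}\sinh n\theta)$, valid for a timelike split quaternion with spacelike vector part. Substituting this expression for $q^n$ into $\mathbf{L}_{q^n}$ and applying the linearity and scaling properties $\mathbf{L}_{p+q}=\mathbf{L}_p+\mathbf{L}_q$ and $\mathbf{L}_{rp}=r\mathbf{L}_p$, together with $\mathbf{L}_1=\mathbf{I}_4$, yields
\[
\left[\mathbf{L}_q\right]^n=\mathbf{L}_{q^n}=(N_q)^n\left[\cosh n\theta\,\mathbf{I}_4+\sinh n\theta\,\mathbf{L}_{\overrightarrow{\varepsilon}}\right].
\]
The same chain of equalities with $\mathbf{R}$ in place of $\mathbf{L}$ gives the companion formula. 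Writing out $\mathbf{L}_{\overrightarrow{\varepsilon}}$ and $\mathbf{R}_{\overrightarrow{\varepsilon}}$ explicitly from the definitions of the left and right representations (with $\overrightarrow{\varepsilon}=(u_1,u_2,u_3)$ and zero scalar part) recovers the two displayed $4\times 4$ matrices, completing the statement.

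The only genuine content beyond bookkeeping is the reduction $\left[\mathbf{L}_q\right]^n=\mathbf{L}_{q^n}$, and even that is a one-line induction once the homomorphism property is granted. Consequently there is no real obstacle; the main point to state carefully is that this proof is valid precisely because the matrix representation is an algebra isomorphism, which transports the already-established scalar De-Moivre identity for $q^n$ verbatim into the matrix setting. I would therefore present the argument concisely, noting that it depends on nothing more than the propositions of Section 2.3 and the formula of Section 2.2, and that the explicit matrix forms follow by direct substitution of the entries of $\mathbf{L}_{\overrightarrow{\varepsilon}}$ and $\mathbf{R}_{\overrightarrow{\varepsilon}}$.
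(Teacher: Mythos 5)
Your proposal is correct, but it takes a genuinely different route from the paper. The paper proves the theorem by brute-force induction on $n$: it assumes the formula for $\left[\mathbf{L}_q\right]^n$, multiplies the two explicit $4\times 4$ matrices entry by entry, and simplifies using the spacelike unit condition $-u_1^2+u_2^2+u_3^2=1$ together with the hyperbolic addition identities $\cosh\theta\cosh n\theta+\sinh\theta\sinh n\theta=\cosh(n+1)\theta$ and $\cosh\theta\sinh n\theta+\sinh\theta\cosh n\theta=\sinh(n+1)\theta$; the same computation is then repeated for $\mathbf{R}_q$. You instead factor the work through the algebra homomorphism: the one-line inductions $\left[\mathbf{L}_q\right]^n=\mathbf{L}_{q^n}$ and $\left[\mathbf{R}_q\right]^n=\mathbf{R}_{q^n}$ (where you correctly note that the order reversal $\mathbf{R}_{pq}=\mathbf{R}_q\mathbf{R}_p$ is harmless for powers of a single element), followed by substituting the scalar De-Moivre formula $q^n=(N_q)^n(\cosh n\theta+\overrightarrow{\varepsilon}\sinh n\theta)$ from Section 2.2 and expanding with the linearity properties $\mathbf{L}_{p+q}=\mathbf{L}_p+\mathbf{L}_q$, $\mathbf{L}_{rp}=r\mathbf{L}_p$, $\mathbf{L}_1=\mathbf{I}_4$. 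Your argument is shorter, avoids all matrix arithmetic, and makes transparent \emph{why} the theorem holds (the representation transports the quaternionic identity verbatim); its cost is that it leans on the quaternionic De-Moivre formula of \cite{ozdemir-09} as an external input, whereas the paper's computation is self-contained at the matrix level and in effect re-proves that formula inside the matrix algebra. Both are complete proofs; yours is arguably the better-structured one, and it would also dispatch the theorems for the other casual characters (timelike vector part, spacelike quaternion) with no additional matrix computation.
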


\begin{proof}
We use induction on positive integer $n.$ Let $q{\small =}N_{q}(\cosh
\theta+\overrightarrow{\varepsilon}\sinh\theta)$ be any timelike split
quaternion where $\overrightarrow{\varepsilon}=(u_{1},u_{2},u_{3})$ is
spacelike unit vector in $\mathbb{E}_{1}^{3}.$ Assume that
\begin{align*}
\left[  \mathbf{L}_{q}\right]  ^{n}  &  =(N_{q})^{n}\left[
\begin{array}
[c]{cccc}%
\cosh n\theta & {\small -}u_{1}\sinh n\theta & u_{2}\sinh n\theta & u_{3}\sinh
n\theta\\
u_{1}\sinh n\theta & \cosh n\theta & u_{3}\sinh n\theta & {\small -}u_{2}\sinh
n\theta\\
u_{2}\sinh n\theta & u_{3}\sinh n\theta & \cosh n\theta & {\small -}u_{1}\sinh
n\theta\\
u_{3}\sinh n\theta & {\small -}u_{2}\sinh n\theta & u_{1}\sinh n\theta & \cosh
n\theta
\end{array}
\right] \\
& \\
&  =(N_{q})^{n}[\cosh n\theta\text{ }\mathbf{I}_{4}+\sinh n\theta\text{
}\mathbf{L}_{\overrightarrow{\varepsilon}}],
\end{align*}
holds. Then\newline$\newline(\mathbf{L}_{q})^{n+1}{\small =}\left(
\mathbf{L}_{q}\right)  ^{n}$ $\mathbf{L}_{q}$\newline\newline{\small =}%
${\small N}_{q}{}^{n+1}\left[
\begin{array}
[c]{cccc}%
\cosh{\small n\theta} & \text{-}u_{1}\sinh{\small n\theta} & u_{2}%
\sinh{\small n\theta} & u_{3}\sinh{\small n\theta}\\
u_{1}\sinh{\small n\theta} & \cosh{\small n\theta} & u_{3}\sinh{\small n\theta
} & \text{-}u_{2}\sinh{\small n\theta}\\
u_{2}\sinh{\small n\theta} & u_{3}\sinh{\small n\theta} & \cosh{\small n\theta
} & \text{-}u_{1}\sinh{\small n\theta}\\
u_{3}\sinh{\small n\theta} & \text{-}u_{2}\sinh{\small n\theta} & u_{1}%
\sinh{\small n\theta} & \cosh{\small n\theta}%
\end{array}
\right]  \left[
\begin{array}
[c]{cccc}%
\cosh{\small \theta} & \text{-}u_{1}\sinh{\small \theta} & u_{2}%
\sinh{\small \theta} & u_{3}\sinh{\small \theta}\\
u_{1}\sinh{\small \theta} & \cosh{\small \theta} & u_{3}\sinh{\small \theta} &
\text{-}u_{2}\sinh{\small \theta}\\
u_{2}\sinh{\small \theta} & u_{3}\sinh{\small \theta} & \cosh{\small \theta} &
\text{-}u_{1}\sinh{\small \theta}\\
u_{3}\sinh{\small \theta} & \text{-}u_{2}\sinh{\small \theta} & u_{1}%
\sinh{\small \theta} & \cosh{\small \theta}%
\end{array}
\right]  $\newpage\noindent Using the equality%
\[
-u_{1}^{2}+u_{2}^{2}+u_{3}^{2}=1,
\]
and the identities
\begin{align*}
\cosh\theta\cosh n\theta+\sinh\theta\sinh n\theta &  =\cosh(n+1)\theta,\\
& \\
\cosh\theta\sinh n\theta+\sinh\theta\cosh n\theta &  =\sinh(n+1)\theta,
\end{align*}
we get
\begin{align*}
\left(  \mathbf{L}_{q}\right)  ^{n+1}  &  {\small =}(N_{q})^{n+1}\left[
\begin{array}
[c]{cccc}%
\cosh(n{\small +}1)\theta & -u_{1}\sinh(n{\small +}1)\theta & u_{2}%
\sinh(n{\small +}1)\theta & u_{3}\sinh(n{\small +}1)\theta\\
u_{1}\sinh(n{\small +}1)\theta & \cosh(n{\small +}1)\theta & u_{3}%
\sinh(n{\small +}1)\theta & -u_{2}\sinh(n{\small +}1)\theta\\
u_{2}\sinh(n{\small +}1)\theta & u_{3}\sinh(n{\small +}1)\theta &
\cosh(n{\small +}1)\theta & -u_{1}\sinh(n{\small +}1)\theta\\
u_{3}\sinh(n{\small +}1)\theta & -u_{2}\sinh(n{\small +}1)\theta & u_{1}%
\sinh(n{\small +}1)\theta & \cosh(n{\small +}1)\theta
\end{array}
\right] \\
& \\
&  {\small =}(N_{q})^{n+1}\left\{  \cosh(n{\small +}1)\theta\left[
\begin{array}
[c]{cccc}%
1 & 0 & 0 & 0\\
0 & 1 & 0 & 0\\
0 & 0 & 1 & 0\\
0 & 0 & 0 & 1
\end{array}
\right]  +\sinh(n{\small +}1)\theta\left[
\begin{array}
[c]{cccc}%
0 & -u_{1} & u_{2} & u_{3}\\
u_{1} & 0 & u_{3} & -u_{2}\\
u_{2} & u_{3} & 0 & -u_{1}\\
u_{3} & -u_{2} & u_{1} & 0
\end{array}
\right]  \right\} \\
& \\
&  =(N_{q})^{n+1}\left[  \cosh(n{\small +}1)\theta\text{ }\mathbf{I}_{4}%
+\sinh(n{\small +}1)\theta\text{ }\mathbf{L}_{\overrightarrow{\varepsilon}%
}\right]  .
\end{align*}
Hence the first formula is true. Similarly suppose that
\begin{align*}
\left(  \mathbf{R}_{q}\right)  ^{n}  &  =(N_{q})^{n}\left[
\begin{array}
[c]{cccc}%
\cosh n\theta & -u_{1}\sinh n\theta & u_{2}\sinh n\theta & u_{3}\sinh
n\theta\\
u_{1}\sinh n\theta & \cosh n\theta & -u_{3}\sinh n\theta & u_{2}\sinh
n\theta\\
u_{2}\sinh n\theta & -u_{3}\sinh n\theta & \cosh n\theta & u_{1}\sinh
n\theta\\
u_{3}\sinh n\theta & u_{2}\sinh n\theta & -u_{1}\sinh n\theta & \cosh n\theta
\end{array}
\right] \\
& \\
&  =(N_{q})^{n}[\cosh n\theta\text{ }\mathbf{I}_{4}+\sinh n\theta\text{
}\mathbf{R}_{\overrightarrow{\varepsilon}}]
\end{align*}
is true. Then\newline$\newline\left(  \mathbf{R}_{q}\right)  ^{n+1}=\left(
\mathbf{R}_{q}\right)  ^{n}$ $\mathbf{R}_{q}$\newline\newline{\small =}%
$N_{q}^{n+1}\left[
\begin{array}
[c]{cccc}%
\cosh n\theta & \text{-}u_{1}\sinh n\theta & u_{2}\sinh n\theta & u_{3}\sinh
n\theta\\
u_{1}\sinh n\theta & \cosh n\theta & \text{-}u_{3}\sinh n\theta & u_{2}\sinh
n\theta\\
u_{2}\sinh n\theta & \text{-}u_{3}\sinh n\theta & \cosh n\theta & u_{1}\sinh
n\theta\\
u_{3}\sinh n\theta & u_{2}\sinh n\theta & \text{-}u_{1}\sinh n\theta & \cosh
n\theta
\end{array}
\right]  \left[
\begin{array}
[c]{cccc}%
\cosh\theta & \text{-}u_{1}\sinh\theta & u_{2}\sinh\theta & u_{3}\sinh\theta\\
u_{1}\sinh\theta & \cosh\theta & \text{-}u_{3}\sinh\theta & u_{2}\sinh\theta\\
u_{2}\sinh\theta & \text{-}u_{3}\sinh\theta & \cosh\theta & u_{1}\sinh\theta\\
u_{3}\sinh\theta & u_{2}\sinh\theta & \text{-}u_{1}\sinh\theta & \cosh\theta
\end{array}
\right]  \newline\newline$Then we get
\begin{align*}
\left(  \mathbf{R}_{q}\right)  ^{n+1}  &  \text{=}(N_{q})^{n+1}\left[
\begin{array}
[c]{cccc}%
\cosh(n\text{+}1)\theta & \text{-}u_{1}\sinh(n\text{+}1)\theta & u_{2}%
\sinh(n\text{+}1)\theta & u_{3}\sinh(n\text{+}1)\theta\\
u_{1}\sinh(n\text{+}1)\theta & \cosh(n\text{+}1)\theta & \text{-}u_{3}%
\sinh(n\text{+}1)\theta & u_{2}\sinh(n\text{+}1)\theta\\
u_{2}\sinh(n\text{+}1)\theta & \text{-}u_{3}\sinh(n\text{+}1)\theta &
\cosh(n\text{+}1)\theta & u_{1}\sinh(n\text{+}1)\theta\\
u_{3}\sinh(n\text{+}1)\theta & u_{2}\sinh(n\text{+}1)\theta & \text{-}%
u_{1}\sinh(n\text{+}1)\theta & \cosh(n\text{+}1)\theta
\end{array}
\right] \\
& \\
&  \text{=}(N_{q})^{n+1}\left\{  \cosh(n\text{+}1)\theta\left[
\begin{array}
[c]{cccc}%
1 & 0 & 0 & 0\\
0 & 1 & 0 & 0\\
0 & 0 & 1 & 0\\
0 & 0 & 0 & 1
\end{array}
\right]  \text{+}\sinh(n\text{+}1)\theta\left[
\begin{array}
[c]{cccc}%
0 & \text{-}u_{1} & u_{2} & u_{3}\\
u_{1} & 0 & \text{-}u_{3} & u_{2}\\
u_{2} & \text{-}u_{3} & 0 & u_{1}\\
u_{3} & u_{2} & \text{-}u_{1} & 0
\end{array}
\right]  \right\} \\
& \\
&  \text{=}(N_{q})^{n+1}\left[  \cosh(n+1)\theta\text{ }\mathbf{I}_{4}%
\text{+}\sinh(n+1)\theta\text{ }\mathbf{R}_{\overrightarrow{\varepsilon}%
}\right]  .
\end{align*}
So, the second formula is also true.
\end{proof}

\newpage

\begin{theorem}
For any timelike split quaternion $q$=$N_{q}(\cos\theta$+$\overrightarrow
{\varepsilon}\sin\theta)$ with timelike vector part,
\begin{align*}
\left(  \mathbf{L}_{q}\right)  ^{n}  &  \text{=}(N_{q})^{n}\left[
\begin{array}
[c]{cccc}%
\cos n\theta & \text{-}u_{1}\sin n\theta & u_{2}\sin n\theta & u_{3}\sin
n\theta\\
u_{1}\sin n\theta & \cos n\theta & u_{3}\sin n\theta & \text{-}u_{2}\sin
n\theta\\
u_{2}\sin n\theta & u_{3}\sin n\theta & \cos n\theta & \text{-}u_{1}\sin
n\theta\\
u_{3}\sin n\theta & \text{-}u_{2}\sin n\theta & u_{1}\sin n\theta & \cos
n\theta
\end{array}
\right]  \text{=}(N_{q})^{n}[\cos n\theta\text{ }\mathbf{I}_{4}\text{+}\sin
n\theta\text{ }\mathbf{L}_{\overrightarrow{\varepsilon}}],\\
& \\
\left(  \mathbf{R}_{q}\right)  ^{n}  &  \text{=}(N_{q})^{n}\left[
\begin{array}
[c]{cccc}%
\cos n\theta & \text{-}u_{1}\sin n\theta & u_{2}\sin n\theta & u_{3}\sin
n\theta\\
u_{1}\sin n\theta & \cos n\theta & \text{-}u_{3}\sin n\theta & u_{2}\sin
n\theta\\
u_{2}\sin n\theta & \text{-}u_{3}\sin n\theta & \cos n\theta & u_{1}\sin
n\theta\\
u_{3}\sin n\theta & u_{2}\sin n\theta & \text{-}u_{1}\sin n\theta & \cos
n\theta
\end{array}
\right]  \text{=}(N_{q})^{n}[\cos n\theta\text{ }\mathbf{I}_{4}\text{+}\sin
n\theta\text{ }\mathbf{R}_{\overrightarrow{\varepsilon}}]
\end{align*}
where $\overrightarrow{\varepsilon}=(u_{1},u_{2},u_{3})$ is timelike unit
vector in $\mathbb{E}_{1}^{3}.$
\end{theorem}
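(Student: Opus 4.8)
The plan is to prove this by induction on $n$, exactly as Theorem 1 was handled, since the timelike-vector-part statement is the circular analogue of the hyperbolic case just settled. The one structural change is that the unit vector $\overrightarrow{\varepsilon}=(u_{1},u_{2},u_{3})$ is now timelike rather than spacelike, so its Lorentzian normalization reads
\[
\left\langle \overrightarrow{\varepsilon},\overrightarrow{\varepsilon}\right\rangle _{\mathbb{L}}=-u_{1}^{2}+u_{2}^{2}+u_{3}^{2}=-1,
\]
in contrast with the value $+1$ used in Theorem 1. I expect this single sign flip to carry all the content of the argument.

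Before grinding through the $4\times4$ multiplication, I would first record the clean reformulation that makes the whole computation transparent. Since $\overrightarrow{\varepsilon}$ is a pure split quaternion, the product formula gives $\overrightarrow{\varepsilon}^{2}=\left\langle \overrightarrow{\varepsilon},\overrightarrow{\varepsilon}\right\rangle _{\mathbb{L}}=-1$. Applying parts (ii), (iii) and (iv) of the Proposition on $\mathbf{L}$ then yields
\[
\mathbf{L}_{\overrightarrow{\varepsilon}}^{2}=\mathbf{L}_{\overrightarrow{\varepsilon}^{2}}=\mathbf{L}_{-1}=-\mathbf{I}_{4},
\]
and likewise $\mathbf{R}_{\overrightarrow{\varepsilon}}^{2}=-\mathbf{I}_{4}$. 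Thus in the timelike case $\mathbf{L}_{\overrightarrow{\varepsilon}}$ plays the role of the imaginary unit, whereas in Theorem 1 the spacelike normalization gave $\mathbf{L}_{\overrightarrow{\varepsilon}}^{2}=+\mathbf{I}_{4}$; this is precisely why circular functions now replace the hyperbolic ones.

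With this in hand the inductive step is immediate. Writing $M=\cos\theta\,\mathbf{I}_{4}+\sin\theta\,\mathbf{L}_{\overrightarrow{\varepsilon}}$, so that $\mathbf{L}_{q}=N_{q}M$ by the matrix representation displayed just before the theorem, I would assume $M^{n}=\cos n\theta\,\mathbf{I}_{4}+\sin n\theta\,\mathbf{L}_{\overrightarrow{\varepsilon}}$ and compute $M^{n+1}=M^{n}M$. Because $\mathbf{I}_{4}$ is central and $\mathbf{L}_{\overrightarrow{\varepsilon}}^{2}=-\mathbf{I}_{4}$, expanding the product and collecting terms gives
\[
M^{n+1}=(\cos n\theta\cos\theta-\sin n\theta\sin\theta)\mathbf{I}_{4}+(\sin n\theta\cos\theta+\cos n\theta\sin\theta)\mathbf{L}_{\overrightarrow{\varepsilon}},
\]
which by the angle-addition identities equals $\cos(n+1)\theta\,\mathbf{I}_{4}+\sin(n+1)\theta\,\mathbf{L}_{\overrightarrow{\varepsilon}}$. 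Multiplying by $(N_{q})^{n+1}$ closes the induction, and the identical argument with $\mathbf{R}_{\overrightarrow{\varepsilon}}$ settles the right representation.

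If instead one prefers to mirror Theorem 1 literally by multiplying the two explicit matrices, the only place the causal character enters is the diagonal entry, where the three mixed products contribute $(-u_{1}^{2}+u_{2}^{2}+u_{3}^{2})\sin n\theta\sin\theta=-\sin n\theta\sin\theta$, producing the crucial minus sign in $\cos(n+1)\theta=\cos n\theta\cos\theta-\sin n\theta\sin\theta$. The off-diagonal cancellations that leave a single surviving $u_{k}\sin(n+1)\theta$ in each slot are structurally identical to those in Theorem 1 and require no new verification, since they depend only on the form of $\mathbf{L}_{\overrightarrow{\varepsilon}}$ and not on the normalization. Consequently I do not anticipate any genuine obstacle: the whole theorem reduces to the observation $\mathbf{L}_{\overrightarrow{\varepsilon}}^{2}=\mathbf{R}_{\overrightarrow{\varepsilon}}^{2}=-\mathbf{I}_{4}$ together with De-Moivre's formula for $\cos\theta+i\sin\theta$.
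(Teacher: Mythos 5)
Your proposal is correct, and it proves the inductive step by a genuinely different mechanism than the paper. The paper's proof is the literal analogue of its Theorem~1 argument: assume the formula for $n$, write out $(\mathbf{L}_q)^{n+1}=(\mathbf{L}_q)^n\,\mathbf{L}_q$ as a product of two explicit $4\times4$ matrices, and verify every entry using $-u_1^2+u_2^2+u_3^2=-1$ together with the angle-addition identities $\cos\theta\cos n\theta-\sin\theta\sin n\theta=\cos(n+1)\theta$ and $\cos\theta\sin n\theta+\sin\theta\cos n\theta=\sin(n+1)\theta$; the same is then repeated for $\mathbf{R}_q$. You instead isolate the key lemma $\mathbf{L}_{\overrightarrow{\varepsilon}}^{2}=\mathbf{L}_{\overrightarrow{\varepsilon}^{2}}=\mathbf{L}_{-1}=-\mathbf{I}_{4}$, obtained from $\overrightarrow{\varepsilon}^{2}=\langle\overrightarrow{\varepsilon},\overrightarrow{\varepsilon}\rangle_{\mathbb{L}}=-1$ (the Lorentzian cross product of a vector with itself vanishes) and the homomorphism properties of the left representation, and then run the induction inside the two-dimensional commutative subalgebra spanned by $\mathbf{I}_4$ and $\mathbf{L}_{\overrightarrow{\varepsilon}}$. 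This buys two things: all sixteen entrywise verifications collapse to one line of algebra identical in form to De-Moivre for $\cos\theta+i\sin\theta$, and the $\mathbf{R}$ case follows verbatim (note that the anti-homomorphism $\mathbf{R}_{pq}=\mathbf{R}_q\mathbf{R}_p$ is harmless here since both factors are $\mathbf{R}_{\overrightarrow{\varepsilon}}$). Your lemma is also exactly the identity the paper itself computes by hand later, in the proof of the Euler formula (Theorem~4), so your route unifies the two results; the paper's brute-force computation, by contrast, has the minor virtue of exhibiting the claimed matrix explicitly without appealing to the Propositions. One small point of care: quote the paper's product formula when asserting $\overrightarrow{\varepsilon}^{2}=\langle\overrightarrow{\varepsilon},\overrightarrow{\varepsilon}\rangle_{\mathbb{L}}$, since with the paper's Lorentzian conventions the inner-product term enters with a plus sign, which is what makes this equality (and hence your sign flip relative to Theorem~1) come out right.
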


\begin{proof}
We use induction on positive integer $n.$ For any timelike split quaternion
$q$=$N_{q}(\cos\theta$+$\overrightarrow{\varepsilon}\sin\theta)$ with timelike
unit vector $\overrightarrow{\varepsilon}=(u_{1},u_{2},u_{3})$ in
$\mathbb{E}_{1}^{3}.$ Suppose that
\begin{align*}
\left(  \mathbf{L}_{q}\right)  ^{n}  &  =(N_{q})^{n}\left[
\begin{array}
[c]{cccc}%
\cos n\theta & \text{-}u_{1}\sin n\theta & u_{2}\sin n\theta & u_{3}\sin
n\theta\\
u_{1}\sin n\theta & \cos n\theta & u_{3}\sin n\theta & \text{-}u_{2}\sin
n\theta\\
u_{2}\sin n\theta & u_{3}\sin n\theta & \cos n\theta & \text{-}u_{1}\sin
n\theta\\
u_{3}\sin n\theta & \text{-}u_{2}\sin n\theta & u_{1}\sin n\theta & \cos
n\theta
\end{array}
\right] \\
& \\
&  =(N_{q})^{n}[\cos n\theta\text{ }\mathbf{I}_{4}\text{+}\sin n\theta\text{
}\mathbf{L}_{\overrightarrow{\varepsilon}}],
\end{align*}
holds. Then\newline$\newline(\mathbf{L}_{q})^{n+1}=\left(  \mathbf{L}%
_{q}\right)  ^{n}$ $\mathbf{L}_{q}$\newline$\newline$=${\small (N}%
_{q}{\small )}^{n+1}\left[
\begin{array}
[c]{cccc}%
\cos n\theta & \text{-}u_{1}\sin n\theta & u_{2}\sin n\theta & u_{3}\sin
n\theta\\
u_{1}\sin n\theta & \cos n\theta & u_{3}\sin n\theta & \text{-}u_{2}\sin
n\theta\\
u_{2}\sin n\theta & u_{3}\sin n\theta & \cos n\theta & \text{-}u_{1}\sin
n\theta\\
u_{3}\sin n\theta & \text{-}u_{2}\sin n\theta & u_{1}\sin n\theta & \cos
n\theta
\end{array}
\right]  \left[
\begin{array}
[c]{cccc}%
\cos\theta & \text{-}u_{1}\sin\theta & u_{2}\sin\theta & u_{3}\sin\theta\\
u_{1}\sin\theta & \cos\theta & u_{3}\sin\theta & \text{-}u_{2}\sin\theta\\
u_{2}\sin\theta & u_{3}\sin\theta & \cos\theta & \text{-}u_{1}\sin\theta\\
u_{3}\sin\theta & \text{-}u_{2}\sin\theta & u_{1}\sin\theta & \cos\theta
\end{array}
\right]  \newline\newline$Using the equality%
\[
-u_{1}^{2}+u_{2}^{2}+u_{3}^{2}=-1,
\]
and the identities%
\begin{align*}
\cos\theta\cos n\theta-\sin\theta\sin n\theta &  =\cos(n+1)\theta,\\
& \\
\cos\theta\sin n\theta+\sin\theta\cos n\theta &  =\sin(n+1)\theta,
\end{align*}
we get
\begin{align*}
\left(  \mathbf{L}_{q}\right)  ^{n+1}  &  =(N_{q})^{n+1}\left[
\begin{array}
[c]{cccc}%
\cos(n+1)\theta & \text{-}u_{1}\sin(n+1)\theta & u_{2}\sin(n+1)\theta &
u_{3}\sin(n+1)\theta\\
u_{1}\sin(n+1)\theta & \cos(n+1)\theta & u_{3}\sin(n+1)\theta & \text{-}%
u_{2}\sin(n+1)\theta\\
u_{2}\sin(n+1)\theta & u_{3}\sin(n+1)\theta & \cos(n+1)\theta & \text{-}%
u_{1}\sin(n+1)\theta\\
u_{3}\sin(n+1)\theta & \text{-}u_{2}\sin(n+1)\theta & u_{1}\sin(n+1)\theta &
\cos(n+1)\theta
\end{array}
\right] \\
& \\
&  =(N_{q})^{n+1}\left\{  \cos(n+1)\theta\left[
\begin{array}
[c]{cccc}%
1 & 0 & 0 & 0\\
0 & 1 & 0 & 0\\
0 & 0 & 1 & 0\\
0 & 0 & 0 & 1
\end{array}
\right]  +\sin(n+1)\theta\left[
\begin{array}
[c]{cccc}%
0 & \text{-}u_{1} & u_{2} & u_{3}\\
u_{1} & 0 & u_{3} & \text{-}u_{2}\\
u_{2} & u_{3} & 0 & \text{-}u_{1}\\
u_{3} & \text{-}u_{2} & u_{1} & 0
\end{array}
\right]  \right\} \\
& \\
&  =(N_{q})^{n+1}\left[  \cos(n+1)\theta\text{ }\mathbf{I}_{4}+\sin
(n+1)\theta\text{ }\mathbf{L}_{\overrightarrow{\varepsilon}}\right]  .
\end{align*}
Hence the first formula is true.$\newline\newline$Assume that
\begin{align*}
\left(  \mathbf{R}_{q}\right)  ^{n}  &  =(N_{q})^{n}\left[
\begin{array}
[c]{cccc}%
\cos n\theta & \text{-}u_{1}\sin n\theta & u_{2}\sin n\theta & u_{3}\sin
n\theta\\
u_{1}\sin n\theta & \cos n\theta & \text{-}u_{3}\sin n\theta & u_{2}\sin
n\theta\\
u_{2}\sin n\theta & \text{-}u_{3}\sin n\theta & \cos n\theta & u_{1}\sin
n\theta\\
u_{3}\sin n\theta & u_{2}\sin n\theta & \text{-}u_{1}\sin n\theta & \cos
n\theta
\end{array}
\right] \\
& \\
&  =(N_{q})^{n}[\cos n\theta\text{ }\mathbf{I}_{4}\text{+}\sin n\theta\text{
}\mathbf{R}_{\overrightarrow{\varepsilon}}]
\end{align*}
is true. Then\newline$\newline\left(  \mathbf{R}_{q}\right)  ^{n+1}=\left(
\mathbf{R}_{q}\right)  ^{n}$ $\mathbf{R}_{q}$\newline\newline=$(N_{q}%
)^{n+1}\left[
\begin{array}
[c]{cccc}%
\cos n\theta & \text{{\small -}}u_{1}\sin n\theta & u_{2}\sin n\theta &
u_{3}\sin n\theta\\
u_{1}\sin n\theta & \cos n\theta & \text{{\small -}}u_{3}\sin n\theta &
u_{2}\sin n\theta\\
u_{2}\sin n\theta & \text{{\small -}}u_{3}\sin n\theta & \cos n\theta &
u_{1}\sin n\theta\\
u_{3}\sin n\theta & u_{2}\sin n\theta & \text{{\small -}}u_{1}\sin n\theta &
\cos n\theta
\end{array}
\right]  \left[
\begin{array}
[c]{cccc}%
\cos\theta & \text{{\small -}}u_{1}\sin\theta & u_{2}\sin\theta & u_{3}%
\sin\theta\\
u_{1}\sin\theta & \cos\theta & \text{{\small -}}u_{3}\sin\theta & u_{2}%
\sin\theta\\
u_{2}\sin\theta & \text{{\small -}}u_{3}\sin\theta & \cos\theta & u_{1}%
\sin\theta\\
u_{3}\sin\theta & u_{2}\sin\theta & \text{{\small -}}u_{1}\sin\theta &
\cos\theta
\end{array}
\right]  $\newline$\newline$Then we get
\begin{align*}
\left(  \mathbf{R}_{q}\right)  ^{n+1}  &  \text{=}(N_{q})^{n+1}\left[
\begin{array}
[c]{cccc}%
\cos(n{\small +}1)\theta & \text{{\small -}}u_{1}\sin(n{\small +}1)\theta &
u_{2}\sin(n{\small +}1)\theta & u_{3}\sin(n{\small +}1)\theta\\
u_{1}\sin(n{\small +}1)\theta & \cos(n{\small +}1)\theta & \text{{\small -}%
}u_{3}\sin(n{\small +}1)\theta & u_{2}\sin(n{\small +}1)\theta\\
u_{2}\sin(n{\small +}1)\theta & \text{{\small -}}u_{3}\sin(n{\small +}1)\theta
& \cos(n{\small +}1)\theta & u_{1}\sin(n{\small +}1)\theta\\
u_{3}\sin(n{\small +}1)\theta & u_{2}\sin(n{\small +}1)\theta &
\text{{\small -}}u_{1}\sin(n{\small +}1)\theta & \cos(n{\small +}1)\theta
\end{array}
\right] \\
& \\
&  \text{=}(N_{q})^{n{\small +}1}\left\{  \cos(n{\small +}1)\theta\left[
\begin{array}
[c]{cccc}%
1 & 0 & 0 & 0\\
0 & 1 & 0 & 0\\
0 & 0 & 1 & 0\\
0 & 0 & 0 & 1
\end{array}
\right]  {\small +}\sin(n{\small +}1)\theta\left[
\begin{array}
[c]{cccc}%
0 & \text{{\small -}}u_{1} & u_{2} & u_{3}\\
u_{1} & 0 & \text{{\small -}}u_{3} & u_{2}\\
u_{2} & \text{-}u_{3} & 0 & u_{1}\\
u_{3} & u_{2} & \text{-}u_{1} & 0
\end{array}
\right]  \right\} \\
& \\
&  \text{=}(N_{q})^{n{\small +}1}\left[  \cos(n{\small +}1)\theta\text{
}\mathbf{I}_{4}\text{+}\sin(n{\small +}1)\theta\text{ }\mathbf{R}%
_{\overrightarrow{\varepsilon}}\right]  .
\end{align*}
So, the second formula is also true.
\end{proof}

\begin{theorem}
For any spacelike split quaternion $q=N_{q}(\sinh\theta${\small +}%
$\overrightarrow{\varepsilon}\cosh\theta)$,\newline If $n$ odd then%
\begin{align*}
\left(  \mathbf{L}_{q}\right)  ^{n}  &  =(N_{q})^{n}\left[
\begin{array}
[c]{cccc}%
\sinh{\small n\theta} & \text{-}u_{1}\cosh{\small n\theta} & u_{2}%
\cosh{\small n\theta} & u_{3}\cosh{\small n\theta}\\
u_{1}\cosh{\small n\theta} & \sinh{\small n\theta} & u_{3}\cosh{\small n\theta
} & \text{-}u_{2}\cosh{\small n\theta}\\
u_{2}\cosh{\small n\theta} & u_{3}\cosh{\small n\theta} & \sinh{\small n\theta
} & \text{-}u_{1}\cosh{\small n\theta}\\
u_{3}\cosh{\small n\theta} & \text{-}u_{2}\cosh{\small n\theta} & u_{1}%
\cosh{\small n\theta} & \sinh{\small n\theta}%
\end{array}
\right]  =(N_{q})^{n}[\sinh{\small n\theta}\text{ }\mathbf{I}_{4}%
{\small +}\cosh{\small n\theta}\text{ }\mathbf{L}_{\overrightarrow
{\varepsilon}}],\\
& \\
\left(  \mathbf{R}_{q}\right)  ^{n}  &  =(N_{q})^{n}\left[
\begin{array}
[c]{cccc}%
\sinh{\small n\theta} & \text{-}u_{1}\cosh{\small n\theta} & u_{2}%
\cosh{\small n\theta} & u_{3}\cosh{\small n\theta}\\
u_{1}\cosh{\small n\theta} & \sinh{\small n\theta} & \text{-}u_{3}%
\cosh{\small n\theta} & u_{2}\cosh{\small n\theta}\\
u_{2}\cosh{\small n\theta} & \text{-}u_{3}\cosh{\small n\theta} &
\sinh{\small n\theta} & u_{1}\cosh{\small n\theta}\\
u_{3}\cosh{\small n\theta} & u_{2}\cosh{\small n\theta} & \text{-}u_{1}%
\cosh{\small n\theta} & \sinh{\small n\theta}%
\end{array}
\right]  =(N_{q})^{n}[\sinh{\small n\theta}\text{ }\mathbf{I}_{4}%
+\cosh{\small n\theta}\text{ }\mathbf{L}_{\overrightarrow{\varepsilon}}].
\end{align*}
If $n$ is even then
\begin{align*}
\left(  \mathbf{L}_{q}\right)  ^{n}\text{{}}  &  \text{{\small =}}(N_{q}%
)^{n}\left[
\begin{array}
[c]{cccc}%
\cosh{\small n\theta} & \text{-}u_{1}\sinh{\small n\theta} & u_{2}%
\sinh{\small n\theta} & u_{3}\sinh{\small n\theta}\\
u_{1}\sinh{\small n\theta} & \cosh{\small n\theta} & u_{3}\sinh{\small n\theta
} & \text{-}u_{2}\sinh{\small n\theta}\\
u_{2}\sinh{\small n\theta} & u_{3}\sinh{\small n\theta} & \cosh{\small n\theta
} & \text{-}u_{1}\sinh{\small n\theta}\\
u_{3}\sinh{\small n\theta} & \text{-}u_{2}\sinh{\small n\theta} & u_{1}%
\sinh{\small n\theta} & \cosh{\small n\theta}%
\end{array}
\right]  {\small =}(N_{q})^{n}[\cosh{\small n\theta}\text{ }\mathbf{I}%
_{4}{\small +}\sinh{\small n\theta}\text{ }\mathbf{L}_{\overrightarrow
{\varepsilon}}],\\
& \\
\left(  \mathbf{R}_{q}\right)  ^{n}  &  \text{=}(N_{q})^{n}\left[
\begin{array}
[c]{cccc}%
\cosh{\small n\theta} & \text{-}u_{1}\sinh{\small n\theta} & u_{2}%
\sinh{\small n\theta} & u_{3}\sinh{\small n\theta}\\
u_{1}\sinh{\small n\theta} & \cosh{\small n\theta} & \text{-}u_{3}%
\sinh{\small n\theta} & u_{2}\sinh{\small n\theta}\\
u_{2}\sinh{\small n\theta} & \text{-}u_{3}\sinh{\small n\theta} &
\cosh{\small n\theta} & u_{1}\sinh{\small n\theta}\\
u_{3}\sinh{\small n\theta} & u_{2}\sinh{\small n\theta} & \text{-}u_{1}%
\sinh{\small n\theta} & \cosh{\small n\theta}%
\end{array}
\right]  =(N_{q})^{n}[\cosh{\small n\theta}\text{ }\mathbf{I}_{4}%
{\small +}\sinh{\small n\theta}\text{ }\mathbf{L}_{\overrightarrow
{\varepsilon}}].
\end{align*}
where $\overrightarrow{\varepsilon}=(u_{1},u_{2},u_{3})$ is spacelike unit
vector in $\mathbb{E}_{1}^{3}.$\newpage
\end{theorem}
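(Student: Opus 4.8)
The plan is to reduce everything to a single algebraic fact about $\mathbf{L}_{\overrightarrow{\varepsilon}}$ and $\mathbf{R}_{\overrightarrow{\varepsilon}}$. Since $\overrightarrow{\varepsilon}=(u_{1},u_{2},u_{3})$ is a spacelike unit vector we have $-u_{1}^{2}+u_{2}^{2}+u_{3}^{2}=1$, so as a pure split quaternion $\overrightarrow{\varepsilon}^{2}=\langle\overrightarrow{\varepsilon},\overrightarrow{\varepsilon}\rangle_{\mathbb{L}}=1$ (the Lorentzian self cross product vanishes). Applying the homomorphism property $\mathbf{L}_{pq}=\mathbf{L}_{p}\mathbf{L}_{q}$ together with $\mathbf{L}_{1}=\mathbf{I}_{4}$ gives the key involution identity $\mathbf{L}_{\overrightarrow{\varepsilon}}^{2}=\mathbf{L}_{\overrightarrow{\varepsilon}^{2}}=\mathbf{L}_{1}=\mathbf{I}_{4}$, and likewise $\mathbf{R}_{\overrightarrow{\varepsilon}}^{2}=\mathbf{I}_{4}$; one can alternatively confirm $\mathbf{L}_{\overrightarrow{\varepsilon}}^{2}=\mathbf{I}_{4}$ by direct $4\times4$ multiplication, the diagonal entries collapsing to exactly $-u_{1}^{2}+u_{2}^{2}+u_{3}^{2}$ and the off-diagonal entries cancelling.

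Writing $\mathbf{L}_{q}=N_{q}(\sinh\theta\,\mathbf{I}_{4}+\cosh\theta\,\mathbf{L}_{\overrightarrow{\varepsilon}})$, the two commuting matrices $\mathbf{I}_{4}$ and $\mathbf{L}_{\overrightarrow{\varepsilon}}$ let me expand by the binomial theorem:
\[
(\mathbf{L}_{q})^{n}=(N_{q})^{n}\sum_{m=0}^{n}\binom{n}{m}(\sinh\theta)^{n-m}(\cosh\theta)^{m}\,\mathbf{L}_{\overrightarrow{\varepsilon}}^{\,m}.
\]
Because $\mathbf{L}_{\overrightarrow{\varepsilon}}^{\,m}$ equals $\mathbf{I}_{4}$ for $m$ even and $\mathbf{L}_{\overrightarrow{\varepsilon}}$ for $m$ odd, the sum splits into an $\mathbf{I}_{4}$ part (even $m$) and an $\mathbf{L}_{\overrightarrow{\varepsilon}}$ part (odd $m$), whose coefficients are $\tfrac{1}{2}[(\sinh\theta+\cosh\theta)^{n}\pm(\sinh\theta-\cosh\theta)^{n}]$. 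Using $\sinh\theta+\cosh\theta=e^{\theta}$ and $\sinh\theta-\cosh\theta=-e^{-\theta}$, these coefficients become $\tfrac{1}{2}(e^{n\theta}\pm(-1)^{n}e^{-n\theta})$, and the factor $(-1)^{n}$ is precisely what produces the two cases: for $n$ even the $\mathbf{I}_{4}$ coefficient is $\cosh n\theta$ and the $\mathbf{L}_{\overrightarrow{\varepsilon}}$ coefficient is $\sinh n\theta$, while for $n$ odd they swap to $\sinh n\theta$ and $\cosh n\theta$. The right hand representation is handled identically with $\mathbf{R}_{\overrightarrow{\varepsilon}}$ in place of $\mathbf{L}_{\overrightarrow{\varepsilon}}$.

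If instead I wish to match the inductive style of the previous two theorems, I would prove both formulas simultaneously by induction on $n$, carrying the parity as part of the hypothesis. The base case $n=1$ is the odd formula, which is just $\mathbf{L}_{q}$ itself. For the step I multiply the order $n$ expression by $\mathbf{L}_{q}=N_{q}(\sinh\theta\,\mathbf{I}_{4}+\cosh\theta\,\mathbf{L}_{\overrightarrow{\varepsilon}})$, replace $\mathbf{L}_{\overrightarrow{\varepsilon}}^{2}$ by $\mathbf{I}_{4}$, and apply the addition identities $\cosh(n+1)\theta=\cosh n\theta\cosh\theta+\sinh n\theta\sinh\theta$ and $\sinh(n+1)\theta=\sinh n\theta\cosh\theta+\cosh n\theta\sinh\theta$; an odd $n$ then lands in the even form for $n+1$ and conversely. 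The only genuine obstacle is this bookkeeping of parity, since the shape of the answer toggles at each step, and one must be careful that the inductive hypothesis carries the correct form for each residue of $n$ modulo $2$; the arithmetic itself is routine once $\mathbf{L}_{\overrightarrow{\varepsilon}}^{2}=\mathbf{R}_{\overrightarrow{\varepsilon}}^{2}=\mathbf{I}_{4}$ is in hand. This is exactly the matrix shadow of the quaternionic De-Moivre formula (iii) recalled in the preliminaries, where the same odd/even split appears.
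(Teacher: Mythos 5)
Your proof is correct. Note first that the paper offers no proof of this particular statement: it is the only theorem in the paper stated without a proof, the intended argument evidently being the same entrywise induction used for the two preceding theorems (multiply the $4\times4$ matrix for exponent $n$ by the one for exponent $1$, then use $-u_{1}^{2}+u_{2}^{2}+u_{3}^{2}=1$ and the hyperbolic addition identities), with the extra wrinkle that the inductive hypothesis must carry the parity of $n$, since the closed form toggles between $\sinh n\theta\,\mathbf{I}_{4}+\cosh n\theta\,\mathbf{L}_{\overrightarrow{\varepsilon}}$ and $\cosh n\theta\,\mathbf{I}_{4}+\sinh n\theta\,\mathbf{L}_{\overrightarrow{\varepsilon}}$ at each step; your fallback induction sketch is exactly that argument and is sound. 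Your primary route is genuinely different and cleaner. You use the multiplicativity of the left representation, $\mathbf{L}_{pq}=\mathbf{L}_{p}\mathbf{L}_{q}$ and $\mathbf{L}_{1}=\mathbf{I}_{4}$, together with $\overrightarrow{\varepsilon}^{\,2}=\left\langle \overrightarrow{\varepsilon},\overrightarrow{\varepsilon}\right\rangle _{\mathbb{L}}=1$ (the Lorentzian self cross product indeed vanishes, the determinant having two equal rows), to obtain the involution $\mathbf{L}_{\overrightarrow{\varepsilon}}^{2}=\mathbf{I}_{4}$; for the right representation the order reversal $\mathbf{R}_{pq}=\mathbf{R}_{q}\mathbf{R}_{p}$ is harmless because $p=q=\overrightarrow{\varepsilon}$. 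The binomial theorem for the commuting pair $\mathbf{I}_{4}$, $\mathbf{L}_{\overrightarrow{\varepsilon}}$ and the coefficient extraction $\tfrac{1}{2}\bigl[e^{n\theta}\pm(-1)^{n}e^{-n\theta}\bigr]$ (via $\sinh\theta+\cosh\theta=e^{\theta}$ and $\sinh\theta-\cosh\theta=-e^{-\theta}$) then deliver both parities in one stroke: $(-1)^{n}=1$ gives $\cosh n\theta\,\mathbf{I}_{4}+\sinh n\theta\,\mathbf{L}_{\overrightarrow{\varepsilon}}$, while $(-1)^{n}=-1$ gives $\sinh n\theta\,\mathbf{I}_{4}+\cosh n\theta\,\mathbf{L}_{\overrightarrow{\varepsilon}}$, exactly as claimed. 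Compared with the induction template, this buys three things: no $4\times4$ computations at all, a single argument covering $\mathbf{L}_{q}$ and $\mathbf{R}_{q}$ simultaneously, and a structural explanation of the odd/even dichotomy (the sign $(-1)^{n}$) rather than its emergence as bookkeeping. A side benefit of your derivation: it makes plain that the bracketed forms of the $\mathbf{R}_{q}$ formulas in the statement should read $\mathbf{R}_{\overrightarrow{\varepsilon}}$ rather than $\mathbf{L}_{\overrightarrow{\varepsilon}}$, which is a misprint in the paper.
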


\section{Euler formula for real matrices of split quaternions}

\noindent In this part, we will state Euler formula for real matrices of
spacelike and timelike pure split quaternions, separately.

\begin{theorem}
For any timelike pure unit split quaternion $q=\overrightarrow{\varepsilon
}=(u_{1},u_{2},u_{3}),$
\[
e^{\theta\mathbf{L}_{q}}=\cos\theta\mathbf{I}_{4}+\sin\theta\text{ }%
\mathbf{L}_{q}\text{ and }e^{\theta\mathbf{R}_{q}}=\cos\theta\mathbf{I}%
_{4}+\sin\theta\text{ }\mathbf{R}_{q}.
\]

\end{theorem}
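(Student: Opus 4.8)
The plan is to reduce the statement to the classical Euler identity by showing that the representing matrix squares to $-\mathbf{I}_4$. First I would compute $q^2$ for the pure timelike unit split quaternion $q=\overrightarrow{\varepsilon}=(u_1,u_2,u_3)$. Applying the product formula for split quaternions with vanishing scalar part $S_q=0$, and using that $\overrightarrow{\varepsilon}\times_{\mathbb{L}}\overrightarrow{\varepsilon}=0$, one obtains $q^2=\langle\overrightarrow{\varepsilon},\overrightarrow{\varepsilon}\rangle_{\mathbb{L}}=-u_1^2+u_2^2+u_3^2$. Since $\overrightarrow{\varepsilon}$ is a timelike unit vector in $\mathbb{E}_1^3$, this quantity equals $-1$, so $q^2=-1$ in $\widehat{\mathbb{H}}$.

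Next I would transport this identity to the matrix level using the homomorphism properties recorded in the two Propositions. Because $\mathbf{L}_{pq}=\mathbf{L}_p\mathbf{L}_q$ and $\mathbf{L}_1=\mathbf{I}_4$, we get $\mathbf{L}_q^2=\mathbf{L}_{q^2}=\mathbf{L}_{-1}=-\mathbf{I}_4$. For the right representation, the reversed rule $\mathbf{R}_{pq}=\mathbf{R}_q\mathbf{R}_p$ still gives $\mathbf{R}_q^2=\mathbf{R}_{q^2}=\mathbf{R}_{-1}=-\mathbf{I}_4$, since $p=q$ makes the order immaterial. (Alternatively one could verify $\mathbf{L}_q^2=-\mathbf{I}_4$ by a direct $4\times 4$ multiplication, invoking $-u_1^2+u_2^2+u_3^2=-1$ exactly as in the proof of the timelike-vector-part theorem, but the homomorphism route is cleaner.)

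Finally I would substitute $\mathbf{L}_q^2=-\mathbf{I}_4$ into the power-series definition $e^{\theta\mathbf{L}_q}=\sum_{n\geq 0}\tfrac{\theta^n}{n!}\mathbf{L}_q^n$. The powers then cycle as $\mathbf{L}_q^{2m}=(-1)^m\mathbf{I}_4$ and $\mathbf{L}_q^{2m+1}=(-1)^m\mathbf{L}_q$, so grouping even and odd indices splits the series into $\left(\sum_{m}\tfrac{(-1)^m\theta^{2m}}{(2m)!}\right)\mathbf{I}_4+\left(\sum_{m}\tfrac{(-1)^m\theta^{2m+1}}{(2m+1)!}\right)\mathbf{L}_q=\cos\theta\,\mathbf{I}_4+\sin\theta\,\mathbf{L}_q$, where I recognize the Taylor series of $\cos\theta$ and $\sin\theta$. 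The identical computation with $\mathbf{R}_q$, using $\mathbf{R}_q^2=-\mathbf{I}_4$, yields the second identity.

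I do not expect a serious obstacle: the whole content is the observation that $q^2=-1$ makes $\mathbf{L}_q$ and $\mathbf{R}_q$ behave exactly like the imaginary unit $i$, after which the conclusion is the matrix analogue of $e^{i\theta}=\cos\theta+i\sin\theta$. The one point demanding care is the sign bookkeeping in the Lorentzian norm: it is precisely the timelike condition $-u_1^2+u_2^2+u_3^2=-1$ (rather than the spacelike value $+1$) that delivers $-\mathbf{I}_4$, and hence the trigonometric form rather than a hyperbolic one.
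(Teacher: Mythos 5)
Your proposal is correct, and its overall architecture is the same as the paper's: reduce everything to the identity $(\mathbf{L}_q)^2=-\mathbf{I}_4$, then expand $e^{\theta\mathbf{L}_q}$ as a power series, group even and odd powers, and recognize the Taylor series of $\cos\theta$ and $\sin\theta$ (likewise for $\mathbf{R}_q$). The one genuine difference is how you reach the key identity. The paper squares the explicit $4\times4$ matrix $\mathbf{L}_q$ and invokes $-u_1^{2}+u_2^{2}+u_3^{2}=-1$; you instead compute at the quaternion level that $q^{2}=\left\langle \overrightarrow{\varepsilon},\overrightarrow{\varepsilon}\right\rangle _{\mathbb{L}}=-1$ (using $S_q=0$ and $\overrightarrow{\varepsilon}\times_{\mathbb{L}}\overrightarrow{\varepsilon}=0$, a determinant with repeated rows) and then transport this through the representation via $\mathbf{L}_{pq}=\mathbf{L}_p\mathbf{L}_q$, $\mathbf{L}_{rp}=r\mathbf{L}_p$, $\mathbf{L}_{1}=\mathbf{I}_4$, with the reversed product rule for $\mathbf{R}$ being harmless since $p=q$. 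Your route is arguably cleaner: it reuses the Propositions already stated in the paper, handles $\mathbf{L}_q$ and $\mathbf{R}_q$ uniformly in one stroke, and makes explicit that the entire theorem is just the statement that $q^{2}=-1$ forces the representing matrices to behave like the imaginary unit. What the paper's direct matrix computation buys is self-containedness (no appeal to the product formula or the homomorphism properties) and an explicit display of $(\mathbf{L}_q)^2$ that is reused across the other theorems. You also correctly flagged the only delicate sign point, namely that the timelike unit condition gives $\left\langle \overrightarrow{\varepsilon},\overrightarrow{\varepsilon}\right\rangle _{\mathbb{L}}=-1$ rather than $+1$, which is what yields the trigonometric rather than hyperbolic form.
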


\begin{proof}
Let $q=\overrightarrow{\varepsilon}=(u_{1},u_{2},u_{3})$ be any timelike pure
unit split quaternion, we have
\[
\mathbf{L}_{q}=\left[
\begin{array}
[c]{cccc}%
0 & -u_{1} & u_{2} & u_{3}\\
u_{1} & 0 & u_{3} & -u_{2}\\
u_{2} & u_{3} & 0 & -u_{1}\\
u_{3} & -u_{2} & u_{1} & 0
\end{array}
\right]  \text{ and }\left(  \mathbf{L}_{q}\right)  ^{2}=(-u_{1}^{2}+u_{2}%
^{2}+u_{3}^{2})\left[
\begin{array}
[c]{cccc}%
1 & 0 & 0 & 0\\
0 & 1 & 0 & 0\\
0 & 0 & 1 & 0\\
0 & 0 & 0 & 1
\end{array}
\right]
\]
Using the identity%
\[
-u_{1}^{2}+u_{2}^{2}+u_{3}^{2}=-1
\]
we get $\left(  \mathbf{L}_{q}\right)  ^{2}=-\mathbf{I}_{4}.$ So, we have
\begin{align*}
e^{\theta\mathbf{L}_{q}}  &  =\mathbf{I}_{4}+\theta\mathbf{L}_{q}%
+\dfrac{\theta^{2}}{2!}(\mathbf{L}_{q})^{2}+\dfrac{\theta^{3}}{3!}%
(\mathbf{L}_{q})^{3}+\dfrac{\theta^{4}}{4!}(\mathbf{L}_{q})^{4}+\dfrac
{\theta^{5}}{5!}(\mathbf{L}_{q})^{5}+\dfrac{\theta^{6}}{6!}(\mathbf{L}%
_{q})^{6}+\cdots\\
& \\
&  =\mathbf{I}_{4}+\theta\mathbf{L}_{q}+\dfrac{\theta^{2}}{2!}(-\mathbf{I}%
_{4})+\dfrac{\theta^{3}}{3!}(-\mathbf{L}_{q})+\dfrac{\theta^{4}}%
{4!}(\mathbf{I}_{4})+\dfrac{\theta^{5}}{5!}(\mathbf{L}_{q})+\dfrac{\theta^{6}%
}{6!}(-\mathbf{I}_{4})+\cdots\\
& \\
&  =(1-\dfrac{\theta^{2}}{2!}+\dfrac{\theta^{4}}{4!}-\dfrac{\theta^{6}}%
{6!}+\cdots)\mathbf{I}_{4}+(\theta-\dfrac{\theta^{3}}{3!}+\dfrac{\theta^{5}%
}{5!}-\cdots)\mathbf{L}_{q}\\
& \\
&  =\cos\theta\text{ }\mathbf{I}_{4}+\sin\theta\text{ }\mathbf{L}_{q}.
\end{align*}
It is easy to see $\left(  \mathbf{R}_{q}\right)  ^{2}=-\mathbf{I}_{4}$ and
$e^{\theta\mathbf{R}_{q}}=\cos\theta\mathbf{I}_{4}+\sin\theta$ $\mathbf{R}%
_{q}.$
\end{proof}

\begin{theorem}
For any spacelike \ unit pure split quaternion $q=\overrightarrow{\varepsilon
}=(u_{1},u_{2},u_{3}),$%
\[
e^{\theta\mathbf{L}_{q}}=\cosh\theta\text{ }\mathbf{I}_{4}+\sinh\theta\text{
}\mathbf{L}_{q}\text{ and }e^{\theta\mathbf{R}_{q}}=\cosh\theta\text{
}\mathbf{I}_{4}+\sinh\theta\text{ }\mathbf{R}_{q}%
\]

\end{theorem}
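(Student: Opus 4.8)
The plan is to reuse, almost verbatim, the computation from the preceding theorem on timelike pure quaternions; the whole argument turns on a single sign change in the character condition, which is exactly what sends $\cos/\sin$ to $\cosh/\sinh$. First I would write down the left representation of the pure quaternion $q=\overrightarrow{\varepsilon}=(u_{1},u_{2},u_{3})$,
\[
\mathbf{L}_{q}=\left[\begin{array}{cccc} 0 & -u_{1} & u_{2} & u_{3}\\ u_{1} & 0 & u_{3} & -u_{2}\\ u_{2} & u_{3} & 0 & -u_{1}\\ u_{3} & -u_{2} & u_{1} & 0 \end{array}\right],
\]
and square it directly to obtain $\left(\mathbf{L}_{q}\right)^{2}=(-u_{1}^{2}+u_{2}^{2}+u_{3}^{2})\mathbf{I}_{4}$, precisely as in the timelike case.

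The decisive step is invoking the character of $q$. Since $q$ is now a \emph{spacelike} unit pure quaternion, we have $I_{q}=u_{1}^{2}-u_{2}^{2}-u_{3}^{2}<0$ together with $N_{q}=1$, which forces $-u_{1}^{2}+u_{2}^{2}+u_{3}^{2}=1$ rather than the value $-1$ used before. Substituting this gives $\left(\mathbf{L}_{q}\right)^{2}=\mathbf{I}_{4}$, hence $\left(\mathbf{L}_{q}\right)^{2m}=\mathbf{I}_{4}$ and $\left(\mathbf{L}_{q}\right)^{2m+1}=\mathbf{L}_{q}$ for every $m\geq 0$.

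With these powers in hand I would expand the matrix exponential by its defining series, separate even and odd indices, and recognise the two scalar series as $\cosh\theta$ and $\sinh\theta$:
\begin{align*}
e^{\theta\mathbf{L}_{q}} &= \sum_{m\geq 0}\frac{\theta^{2m}}{(2m)!}\left(\mathbf{L}_{q}\right)^{2m}+\sum_{m\geq 0}\frac{\theta^{2m+1}}{(2m+1)!}\left(\mathbf{L}_{q}\right)^{2m+1}\\
&= \left(\sum_{m\geq 0}\frac{\theta^{2m}}{(2m)!}\right)\mathbf{I}_{4}+\left(\sum_{m\geq 0}\frac{\theta^{2m+1}}{(2m+1)!}\right)\mathbf{L}_{q}=\cosh\theta\,\mathbf{I}_{4}+\sinh\theta\,\mathbf{L}_{q}.
\end{align*}
The identity for $\mathbf{R}_{q}$ follows by the same route, since the analogous direct computation gives $\left(\mathbf{R}_{q}\right)^{2}=(-u_{1}^{2}+u_{2}^{2}+u_{3}^{2})\mathbf{I}_{4}=\mathbf{I}_{4}$ as well.

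I do not expect a genuine obstacle: once $\left(\mathbf{L}_{q}\right)^{2}=\mathbf{I}_{4}$ is established, the remainder is just the scalar splitting of $e^{\theta}$ into hyperbolic cosine and sine, transported term by term to matrices. The only point demanding care is the sign bookkeeping in the character condition --- verifying that ``spacelike unit'' yields $-u_{1}^{2}+u_{2}^{2}+u_{3}^{2}=+1$, which is exactly the input that flips the circular functions of the timelike theorem into the hyperbolic ones claimed here.
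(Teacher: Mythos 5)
Your proposal is correct and follows essentially the same route as the paper's own proof: square $\mathbf{L}_{q}$ to get $(\mathbf{L}_{q})^{2}=(-u_{1}^{2}+u_{2}^{2}+u_{3}^{2})\mathbf{I}_{4}$, invoke the spacelike unit condition $-u_{1}^{2}+u_{2}^{2}+u_{3}^{2}=1$ to conclude $(\mathbf{L}_{q})^{2}=\mathbf{I}_{4}$, and then split the exponential series into even and odd parts yielding $\cosh\theta$ and $\sinh\theta$, with the $\mathbf{R}_{q}$ case handled identically. Your explicit justification of the sign via $I_{q}<0$ and $N_{q}=1$ is a slightly more careful bookkeeping of the character condition than the paper bothers to write, but it is the same argument.
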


\begin{proof}
Let $q=\overrightarrow{\varepsilon}=(u_{1},u_{2},u_{3})$ be any spacelike pure
unit split quaternion, we have
\[
\left(  \mathbf{L}_{q}\right)  ^{2}=(-u_{1}^{2}+u_{2}^{2}+u_{3}^{2})\left[
\begin{array}
[c]{cccc}%
1 & 0 & 0 & 0\\
0 & 1 & 0 & 0\\
0 & 0 & 1 & 0\\
0 & 0 & 0 & 1
\end{array}
\right]
\]
Using the identity%
\[
-u_{1}^{2}+u_{2}^{2}+u_{3}^{2}=1
\]
we get $\left(  \mathbf{L}_{q}\right)  ^{2}=\mathbf{I}_{4}.$ So, we have
\begin{align*}
e^{\theta\mathbf{L}_{q}}  &  =\mathbf{I}_{4}+\theta\mathbf{L}_{q}%
+\dfrac{\theta^{2}}{2!}(\mathbf{L}_{q})^{2}+\dfrac{\theta^{3}}{3!}%
(\mathbf{L}_{q})^{3}+\dfrac{\theta^{4}}{4!}(\mathbf{L}_{q})^{4}+\dfrac
{\theta^{5}}{5!}(\mathbf{L}_{q})^{5}+\dfrac{\theta^{6}}{6!}(\mathbf{L}%
_{q})^{6}+\cdots\\
& \\
&  =\mathbf{I}_{4}+\theta\mathbf{L}_{q}+\dfrac{\theta^{2}}{2!}\mathbf{I}%
_{4}+\dfrac{\theta^{3}}{3!}\mathbf{L}_{q}+\dfrac{\theta^{4}}{4!}\mathbf{I}%
_{4}+\dfrac{\theta^{5}}{5!}\mathbf{L}_{q}+\dfrac{\theta^{6}}{6!}\mathbf{I}%
_{4}+\cdots\\
& \\
&  =(1+\dfrac{\theta^{2}}{2!}+\dfrac{\theta^{4}}{4!}+\dfrac{\theta^{6}}%
{6!}+\cdots)\mathbf{I}_{4}+(\theta+\dfrac{\theta^{3}}{3!}+\dfrac{\theta^{5}%
}{5!}+\cdots)\mathbf{L}_{q}\\
& \\
&  =\cosh\theta\text{ }\mathbf{I}_{4}+\sinh\theta\text{ }\mathbf{L}_{q}.
\end{align*}
Similarly, it can be easily shown that $\left(  \mathbf{R}_{q}\right)
^{2}=\mathbf{I}_{4}$ and $e^{\theta\mathbf{R}_{q}}=\cosh\theta\mathbf{I}%
_{4}+\sinh\theta$ $\mathbf{R}_{q}.$
\end{proof}

\bigskip

\newpage

\bigskip

Melek Erdo\u{g}du

Department of Mathematics-Computer Sciences

Necmettin Erbakan University

42060 Konya, Turkey

e-mail: merdogdu@konya.edu.tr.

\bigskip

Mustafa \"{O}zdemir

Department of Mathematics

Akdeniz University

07058 Antalya, Turkey

e-mail: mozdemir@akdeniz.edu.tr.

\end{document}